\newtheorem{theorem}{Theorem}[section]
\newtheorem{lemma}[theorem]{Lemma}
\theoremstyle{definition}
\theoremstyle{remark}
\newtheorem{remark}[theorem]{Remark}
\numberwithin{equation}{section}
\begin{document}

\setcounter{page}{1}


\begin{center}
{\Large \textbf{Modified Stancu-type Dunkl generalization of Sz\'{a}%
sz- Kantorovich-operators}}

\bigskip

\textbf{M. Mursaleen} and \textbf{Md. Nasiruzzaman}

Department of\ Mathematics, Aligarh Muslim University, Aligarh--202002, India%
\\[0pt]

mursaleenm@gmail.com; nasir3489@gmail.com \\[0pt]

\bigskip

\textbf{Abstract}
\end{center}

\parindent=8mm {\footnotesize {In this paper, we introduce a modification of the Sz\'{a}sz-Mirakjan-Kantorovich operators as well as Stancu operators \cite{ckz} (or a Dunkl generalization of modified Sz\'{a}sz-Mirakjan-Kantrovich operators \cite{duman}) which preserve
the linear functions. These types of operators modification enables better error estimation on the interval
$[\frac{1}{2},\infty)$ than the classical Dunkl Sz\'{a}sz-Mirakjan-Kantrovich as well as Stancu operators.
We obtain some approximation results via well known Korovkin's type theorem, weighted Korovkin's type theorem
convergence properties by using the modulus of continuity and the rate of convergence of the operators for
functions belonging to the Lipschitz class.}}\newline

{\footnotesize \emph{Keywords and phrases}: Dunkl analogue;
generating functions; generalization of exponential function; Sz\'{a}sz
operator; Korovkin type Theorem; modulus of continuity;} {\footnotesize {weighted modulus of
continuity.}}

{\footnotesize \emph{AMS Subject Classification (2010):} Primary 41A25;
41A36; Secondary 33C45.}










\section{Introduction and preliminaries}

In 1912, S.N Bernstein \cite{sbbl1} introduced the following sequence of
operators $B_{n}:C[0,1]\rightarrow C[0,1]$ defined by
\begin{equation}
B_{n}(f;x)=\sum_{k=0}^{n}\binom{n}{k}x^{k}(1-x)^{n-k}f\left( \frac{k}{n}%
\right) ,~~~~~~~x\in \lbrack 0,1].  \label{s1}
\end{equation}%
for $n\in \mathbb{N}$ and $f\in C[0,1]$.

In 1930 the first Bernstein-Kantorovich operators \cite{kantiv}. And in 1950 Sz\'{a}asz
operators \cite{sbbl4} for $x \geq 0$, defined as follows:
\begin{equation}  \label{s2}
S_n(f;x)=e^{-nx}\sum_{k=0}^\infty \frac{(nx)^k}{k!} f\left(\frac{k}{n}%
\right),~~~~~~~f \in C[0,\infty).
\end{equation}

In recent years, many results about the generalization of Sz\'{a}sz operators have been obtained by several mathematicians
\cite{satk, satik, smah2, smur1, smur3, stk, stk1}.
Recently, Sucu \cite{sbbl9} define a Dunkl analogue of Sz\'{a}sz operators via a generalization of the exponential function given
by \cite{sbbl10} as follows:

\begin{equation}
S_{n}^{\ast}(f;x):= \frac{1}{e_\mu(nx)}\sum_{k=0}^\infty \frac{(nx)^k}{%
\gamma_\mu(k)} f \left(\frac{k+2\mu\theta_k}{n}\right),
\end{equation}
where $x \geq 0,~~~f \in C[0,\infty),\mu \geq 0,~~~n \in \mathbb{N}$\newline
and
\begin{equation}\label{tot1}
e_\mu(x)= \sum_{n=0}^\infty \frac{x^n}{\gamma_\mu(n)}.
\end{equation}
Here
\begin{equation}\label{tot2}
\gamma_\mu(2k)= \frac{2^{2k}k!\Gamma\left(k+\mu+\frac{1}{2}\right)}{%
\Gamma\left(\mu+\frac{1}{2}\right)},~~~\gamma_\mu(2k+1)= \frac{2^{2k+1}k!\Gamma\left(k+\mu+\frac{3}{2}\right)}{%
\Gamma\left(\mu+\frac{1}{2}\right)}.
\end{equation}

There is given a recursion for $\gamma _{\mu }$
\begin{equation*}
\gamma _{\mu }(k+1)=(k+1+2\mu \theta _{k+1})\gamma _{\mu
}(k),~~~~k=0,1,2,\cdots ,
\end{equation*}%
where
\begin{equation*}
\theta _{k}=%
\begin{cases}
0 & \quad \text{if }k\in 2\mathbb{N} \\
1 & \quad \text{if }k\in 2\mathbb{N}+1.%
\end{cases}%
\end{equation*}

For $\mu \geq 0,~~x\geq 0$,  and $f\in C[0,\infty )$,  G\"{u}rhan I\c{c}\"{o}z \cite{ckz} gave a Dunkl generalization of
Kantrovich type integral generalization of Sz\'{a}sz operators as follows:
\begin{equation}\label{sss1}
T_{n}^{\ast}(f;x)=\frac{n}{e_{\mu}(nx)}\sum_{k=0}^{\infty }\frac{%
(nx)^{k}}{\gamma _{\mu }(k)}\int_{\frac{k+2\mu \theta
_{k}}{n}}^{\frac{k+1+2\mu \theta _{k}}{n}}f\left(\frac{nt+\alpha}{n+\beta}\right)\mathrm{d}t ,
\end{equation}%
 where $e_\mu(x)$ and $\gamma_k$ are defined in \cite{sbbl9} by \eqref{tot1},\eqref{tot2}.

\begin{lemma}
\label{vod}

\begin{enumerate}
\item \label{vd1} $T_{n}^{\ast}(1;x)=1$,

\item \label{vd2} $T_{n}^{\ast}(t;x)=\frac{n}{n+\beta}x +\frac{1}{n+\beta}\left(\alpha+\frac{1}{2}\right)$,

\item \label{vd3}$ T_{n}^{\ast}(t^2;x) = \left(\frac{n}{n+\beta}\right)^2
\left(x^2+2 \left(1+\mu\frac{e_\mu(-nx)}{e_\mu(nx)}\right)\frac{x}{n}+\frac{1}{3n^2}\right)
+\frac{2n\alpha}{(n+\beta)^2}\left(x+\frac{1}{2n}\right)+\left(\frac{\alpha}{n+\beta}\right)^2$.
\end{enumerate}
\end{lemma}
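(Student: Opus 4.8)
The plan is to handle the three assertions in increasing order of difficulty, in each case first evaluating the inner integral explicitly and then reducing everything to power sums of the form $\sum_{k=0}^\infty\frac{(nx)^k}{\gamma_\mu(k)}(k+2\mu\theta_k)^j$ weighted against $e_\mu(nx)$. The engine driving all three computations is the given recursion rewritten as $\gamma_\mu(k)=(k+2\mu\theta_k)\gamma_\mu(k-1)$, which yields the identity $\frac{k+2\mu\theta_k}{\gamma_\mu(k)}=\frac{1}{\gamma_\mu(k-1)}$ for $k\ge 1$ and lets me shift the summation index. For part (\ref{vd1}), setting $f\equiv 1$ collapses the integral to $\int_{(k+2\mu\theta_k)/n}^{(k+1+2\mu\theta_k)/n}\mathrm{d}t=\frac1n$, so the prefactor $n$ cancels and the remaining series is exactly $e_\mu(nx)$, giving $1$ after division. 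For part (\ref{vd2}), taking $f(u)=u$ and integrating $\frac{nt+\alpha}{n+\beta}$ over the same interval produces the factor $\frac{1}{n(n+\beta)}\bigl((k+2\mu\theta_k)+\frac12+\alpha\bigr)$; the constant part reproduces $e_\mu(nx)$, while the part carrying $k+2\mu\theta_k$ is resolved by the recursion and an index shift $k\mapsto k-1$, giving $\sum_{k=0}^\infty\frac{(nx)^k}{\gamma_\mu(k)}(k+2\mu\theta_k)=nx\,e_\mu(nx)$. Dividing by $(n+\beta)e_\mu(nx)$ yields $\frac{n}{n+\beta}x+\frac{1}{n+\beta}(\alpha+\frac12)$.

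For part (\ref{vd3}), I take $f(u)=u^2$ and expand $(nt+\alpha)^2=n^2t^2+2n\alpha t+\alpha^2$. Integrating over $[\,(k+2\mu\theta_k)/n,(k+1+2\mu\theta_k)/n\,]$ and writing $m=k+2\mu\theta_k$, the three pieces contribute $\frac{1}{n}(m^2+m+\frac13)$, $\frac{\alpha}{n}(2m+1)$ and $\frac{\alpha^2}{n}$ respectively. The constant sum and the linear-in-$m$ sum are already available from parts (\ref{vd1})--(\ref{vd2}), so the only genuinely new ingredient is the quadratic sum $\sum_{k=0}^\infty\frac{(nx)^k}{\gamma_\mu(k)}(k+2\mu\theta_k)^2$.

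To evaluate this quadratic sum I apply the recursion once and shift $k\mapsto k+1$, turning it into $nx\sum_{j=0}^\infty\frac{(nx)^j}{\gamma_\mu(j)}\bigl(j+1+2\mu\theta_{j+1}\bigr)$; here the parity of the index flips, and using $\theta_{j+1}=1-\theta_j$ I split off the sums already computed and am left with the odd-index partial sum $\sum_{j\ \mathrm{odd}}\frac{(nx)^j}{\gamma_\mu(j)}=\frac{e_\mu(nx)-e_\mu(-nx)}{2}$, obtained by adding and subtracting the series for $e_\mu(nx)$ and $e_\mu(-nx)$. This is precisely the mechanism that introduces the ratio $\frac{e_\mu(-nx)}{e_\mu(nx)}$ into formula (\ref{vd3}): one finds $\sum_{k=0}^\infty\frac{(nx)^k}{\gamma_\mu(k)}(k+2\mu\theta_k)^2=nx\,e_\mu(nx)\bigl(nx+1+2\mu\frac{e_\mu(-nx)}{e_\mu(nx)}\bigr)$. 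Assembling the three sums, dividing by $(n+\beta)^2e_\mu(nx)$, and collecting the $x^2$, $x$ and constant contributions reproduces the stated expression.

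The main obstacle is the parity bookkeeping in the quadratic sum of part (\ref{vd3}): the identity $\theta_{j+1}=1-\theta_j$ created by the index shift is exactly what forces the appearance of the odd-indexed subseries and hence of $e_\mu(-nx)$, a term absent from the lower moments. Everything else amounts to elementary integration together with two applications of the $\gamma_\mu$ recursion, so once this parity step is organized correctly the remaining work is a purely algebraic collection of terms.
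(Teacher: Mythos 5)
Your proposal is correct: I checked the inner integrals (each piece contributes $\frac{1}{n}(m^2+m+\frac13)$, $\frac{\alpha}{n}(2m+1)$, $\frac{\alpha^2}{n}$ with $m=k+2\mu\theta_k$), the index-shift identity $\sum_k\frac{(nx)^k}{\gamma_\mu(k)}(k+2\mu\theta_k)=nx\,e_\mu(nx)$, and the parity step giving $\sum_k\frac{(nx)^k}{\gamma_\mu(k)}(k+2\mu\theta_k)^2=nx\,e_\mu(nx)\bigl(nx+1+2\mu\frac{e_\mu(-nx)}{e_\mu(nx)}\bigr)$, and assembling these does reproduce all three stated formulas. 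Note that the paper itself gives no proof of this lemma (it is imported from the cited work of \.{I}\c{c}\={o}z and \c{C}ekim), but your derivation is exactly the standard argument used there: the recursion for $\gamma_\mu$ plus the observation that the flip $\theta_{j+1}=1-\theta_j$ is what injects $e_\mu(-nx)$ into the second moment.
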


Previous studies demonstrate that providing a better error estimation for positive linear operators plays
an important role in approximation theory, which allows us to approximate much faster to the function being
approximated. In \cite{duman, 3, 4}, various better approximation properties of the Sz\'{a}sz-Mirakjan-Kantrovich operators,
Sz\'{a}sz-Mirakjan operators and Sz\'{a}sz-Mirakjan-Beta operators were investigated.
Recently, in \cite{duman, ckz}, by modifying the Dunkl generalization of Sz\'{a}sz-Mirakjan operators,
we have showed that our modified operators have better error estimation than the classical ones.
 In this paper, we apply the Dunkl generalization to the modified Sz\'{a}sz-Mirakjan-Kantorovich operators \cite{duman}
 and a better approximation to Sz\'{a}sz-Mirakjan-Stancu operators \cite{ckz}.

\section{Construction of operators}

We modify the Sz\'{a}sz-Mirakjan-Kantrovich operators \cite{duman} and define a Dunkl generalization of these modified operators
(or a modification of Dunkl generalization of Sz\'{a}sz-Mirakjan-Kantrovich operators\cite{ckz}) as follows: \newline

Let $\{r_n(x)\}$ be a sequence of real-valued continuous functions defined on $[0,\infty)$ with $0 \leq r_n(x) < \infty$ such that
\begin{equation}
r_n(x)=x- \frac{1}{2n},~~~~x \geq \frac{1}{2}~~~\mbox{and}~~~n \in \mathbb{N}.
\end{equation}

Then for any $x \geq \frac{1}{2},~~~\mu \geq 0$ and $n \in \mathbb{N}$ we define

\begin{equation}
K_{n}(f;x)=\frac{n}{e_{\mu }(n r_n(x))}\sum_{k=0}^{\infty }%
\frac{(n r_n(x))^{k}}{\gamma _{\mu}(k)}\int_{\frac{k+2\mu \theta
_{k}}{n}}^{\frac{k+1+2\mu \theta _{k}}{n}}f(t)\mathrm{d}t,
\label{snss1}
\end{equation}%

where $e_{\mu}(x),~~~\gamma_\mu$ are defined in \cite{sbbl9} by \eqref{tot1},\eqref{tot2} and $f \in C_\zeta[0,\infty)  $  with $\zeta \geq 0$ and
\begin{equation}\label{game}
C_\zeta[0,\infty)=\{ f \in C[0,\infty) : \mid f(t) \mid \leq M(1+t)^\zeta,~~~\mbox{for}~~~\mbox{some}~~~M>0, ~~\zeta>0\}.
\end{equation}
 If we take $\mu=0$ in the operator $K_n$ defined by \eqref{snss1}, then the operator $K_n$ reduces to the modified Sz\'{a}sz-Mirakjan-Kantorovich operators given by Oktay Duman et al., \cite{duman}.

\begin{lemma}
\label{snlm1} Let $K_{n}(.~;~.)$ be the operators given by %
\eqref{snss1}. Then for each $x \geq \frac{1}{2}$, we have we have the following identities:

\begin{enumerate}
\item \label{snlm11} $K_{n}(1;x)=1$,

\item \label{snlm12} $K_{n}(t;x)=x$,

\item \label{snlm13} $K_{n}(t^2;x) = x^2+\frac{1}{n}\left(1+2 \mu \frac{e_\mu(-n r_n(x))}{e_\mu(n r_n(x))}\right)x-
\frac{1}{n^2}\left(\frac{5}{12}+ \mu \frac{e_\mu(-n r_n(x))}{e_\mu(n r_n(x))}\right) $.
\end{enumerate}
\end{lemma}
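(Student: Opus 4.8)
The plan is to reduce all three moments to the single series \eqref{tot1} defining $e_\mu$. Writing $u:=nr_n(x)$ and $m:=k+2\mu\theta_k$, I would first carry out the elementary integrations in \eqref{snss1} over the intervals $[\tfrac{m}{n},\tfrac{m+1}{n}]$, each of length $1/n$:
\[
\int_{m/n}^{(m+1)/n}\mathrm{d}t=\frac1n,\qquad \int_{m/n}^{(m+1)/n}t\,\mathrm{d}t=\frac{2m+1}{2n^{2}},\qquad \int_{m/n}^{(m+1)/n}t^{2}\,\mathrm{d}t=\frac{3m^{2}+3m+1}{3n^{3}}.
\]
Substituting these into \eqref{snss1} expresses $K_n(1;x)$, $K_n(t;x)$ and $K_n(t^2;x)$ as fixed linear combinations of the three series $\sum_k \frac{u^k}{\gamma_\mu(k)}$, $\sum_k \frac{m\,u^k}{\gamma_\mu(k)}$ and $\sum_k \frac{m^2 u^k}{\gamma_\mu(k)}$, divided by $e_\mu(u)$.

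The engine for summing these is the recursion $\gamma_\mu(k+1)=(k+1+2\mu\theta_{k+1})\gamma_\mu(k)$, which I rewrite as the shift identity $\frac{m}{\gamma_\mu(k)}=\frac{1}{\gamma_\mu(k-1)}$ valid for $k\ge 1$, the $k=0$ term dropping out because $\theta_0=0$. The first series is $e_\mu(u)$ by \eqref{tot1}; a single reindexing then gives $\sum_{k\ge 1}\frac{u^k}{\gamma_\mu(k-1)}=u\,e_\mu(u)$ for the second.

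The one genuinely delicate point is the quadratic series $\sum_k \frac{m^2 u^k}{\gamma_\mu(k)}$. Applying the shift once and setting $j=k-1$ produces the factor $j+1+2\mu\theta_{j+1}$, and the difficulty is that the parity of $\theta$ flips under this shift. The resolution is the identity $\theta_{j+1}-\theta_j=(-1)^j$, which lets me split $j+1+2\mu\theta_{j+1}=(j+2\mu\theta_j)+1+2\mu(-1)^j$ and hence
\[
\sum_{k}\frac{m^2 u^k}{\gamma_\mu(k)}=u\Big(u\,e_\mu(u)+e_\mu(u)+2\mu\sum_{j}\frac{(-u)^j}{\gamma_\mu(j)}\Big)=u^{2}e_\mu(u)+u\,e_\mu(u)+2\mu u\,e_\mu(-u),
\]
where the alternating series is exactly $e_\mu(-u)$ by \eqref{tot1}. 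This term is the source of the factor $e_\mu(-nr_n(x))/e_\mu(nr_n(x))$ appearing in part \eqref{snlm13}.

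Assembling the pieces and dividing by $e_\mu(u)$ yields $K_n(1;x)=1$, $K_n(t;x)=r_n(x)+\tfrac{1}{2n}$ and $K_n(t^2;x)=\frac{1}{n^2}\big(u^2+2u+\tfrac13+2\mu u\,\tfrac{e_\mu(-u)}{e_\mu(u)}\big)$. Finally I substitute $u=nr_n(x)=nx-\tfrac12$: the linear moment collapses to $x$, giving \eqref{snlm12}, and in the quadratic moment the constants combine as $\tfrac14-1+\tfrac13=-\tfrac{5}{12}$ and the coefficient of $x$ as $1+2\mu\,\tfrac{e_\mu(-nr_n(x))}{e_\mu(nr_n(x))}$, which is precisely the stated identity \eqref{snlm13}. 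As a shortcut, one may instead read all three moments directly from Lemma \ref{vod} by setting $\alpha=\beta=0$ and replacing $x$ by $r_n(x)$, since then $f\big(\tfrac{nt+\alpha}{n+\beta}\big)=f(t)$ and the weights of \eqref{sss1} match those of \eqref{snss1}.
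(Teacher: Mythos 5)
Your proof is correct. The paper states this lemma without any proof, so there is nothing to compare against directly; your computation (integrate the elementary moments over the intervals of length $1/n$, then sum the resulting series using the recursion $\gamma_\mu(k+1)=(k+1+2\mu\theta_{k+1})\gamma_\mu(k)$ together with $\theta_{j+1}-\theta_j=(-1)^j$, which is what produces the $e_\mu(-nr_n(x))/e_\mu(nr_n(x))$ term) is exactly the standard derivation implicit in the cited sources, and the final substitution $u=nr_n(x)=nx-\tfrac12$ reproduces the stated constants $-\tfrac{5}{12}$ and the coefficient $1+2\mu\,e_\mu(-nr_n(x))/e_\mu(nr_n(x))$. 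Your closing observation, that the lemma also follows from Lemma \ref{vod} with $\alpha=\beta=0$ and $x$ replaced by $r_n(x)$, is likewise valid and is presumably why the authors felt free to omit the proof.
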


Here we define a  modified Sz\'{a}sz-Mirakjan-Kantrovich Stancu operators \cite{duman} and obtain better approximation results by their Dunkl generalization as follows: \newline
For $x \geq \frac{1}{2},~~~\zeta \geq 0 ,~~~n \in \mathbb{N}$, if $f \in C_\zeta[0,\infty)$ satisfying \eqref{game} with $\zeta \geq 0$, then we define

\begin{equation}
K_{n}^{\ast }(f;x)=\frac{n}{e_{\mu }(n r_n(x))}\sum_{k=0}^{\infty }%
\frac{(n r_n(x))^{k}}{\gamma _{\mu}(k)}\int_{\frac{k+2\mu \theta
_{k}}{n}}^{\frac{k+1+2\mu \theta _{k}}{n}}f\left(\frac{nt+\alpha}{n+\beta}\right)\mathrm{d}t,
\label{snssns1}
\end{equation}%

where $e_{\mu}(x),~~~\gamma_\mu$ are defined in \cite{sbbl9} by \eqref{tot1},\eqref{tot2}. \newline
 If we take $\alpha=\beta=0$ in the operator $K_n^{\ast }$ defined by \eqref{snssns1}, then the operator $K_n^{\ast }$ reduces to operators defined by  \eqref{snss1}. And if we take $\mu=0$, the it reduce the operators defined in \cite{duman}.

\begin{lemma}
\label{snlm1} Let $K_{n}^{\ast }(.~;~.)$ be the operators given by %
\eqref{snssns1}. Then for each $x \geq \frac{1}{2}$, we have we have the following identities:

\begin{enumerate}
\item \label{snlm11} $K_{n}^{\ast }(1;x)=1$,

\item \label{snlm12} $K_{n}^{\ast }(t;x)=\frac{n}{n+\beta}x+\frac{\alpha}{n+\beta}$,

\item \label{snlm13} $K_{n}^{\ast }(t^2;x) = \left(\frac{n}{n+\beta}\right)^2x^2+
\frac{n}{(n+\beta)^2} \left(1+2\alpha+2 \mu \frac{e_\mu(-n r_n(x))}{e_\mu(n r_n(x))}\right)x\newline
+\frac{1}{(n+\beta)^2} \left(\alpha^2-\left(\frac{5}{12}+\mu \frac{e_\mu(-n r_n(x))}{e_\mu(n r_n(x))}\right)\right) $.
\end{enumerate}
\end{lemma}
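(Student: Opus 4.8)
The plan is to observe that the Stancu-type operator $K_n^\ast$ factors through the operator $K_n$ of \eqref{snss1}: for every admissible $f$ one has
\begin{equation*}
K_n^\ast(f;x)=K_n(g;x),\qquad g(t)=f\!\left(\frac{nt+\alpha}{n+\beta}\right).
\end{equation*}
This is immediate from comparing \eqref{snss1} with \eqref{snssns1}, since the two operators share the same normalizing factor $n/e_\mu(nr_n(x))$, the same Dunkl weights $(nr_n(x))^k/\gamma_\mu(k)$, and the same limits of integration $[\frac{k+2\mu\theta_k}{n},\frac{k+1+2\mu\theta_k}{n}]$; they differ only in whether $f$ or $f$ composed with the affine map $t\mapsto(nt+\alpha)/(n+\beta)$ sits under the integral sign. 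Because $K_n$ is linear, each moment $K_n^\ast(t^j;x)$ then reduces to a linear combination of the moments $K_n(1;x)$, $K_n(t;x)$, $K_n(t^2;x)$ already recorded in the preceding lemma.

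First I would treat the three test functions in turn. For $f(t)=1$ the associated $g$ is again the constant $1$, so $K_n^\ast(1;x)=K_n(1;x)=1$, which is the first assertion. For $f(t)=t$ the substitution gives $g(t)=\frac{n}{n+\beta}t+\frac{\alpha}{n+\beta}$, whence linearity together with $K_n(t;x)=x$ and $K_n(1;x)=1$ yields $K_n^\ast(t;x)=\frac{n}{n+\beta}x+\frac{\alpha}{n+\beta}$, the second assertion. For $f(t)=t^2$ I expand
\begin{equation*}
g(t)=\left(\frac{nt+\alpha}{n+\beta}\right)^2=\frac{n^2}{(n+\beta)^2}\,t^2+\frac{2n\alpha}{(n+\beta)^2}\,t+\frac{\alpha^2}{(n+\beta)^2},
\end{equation*}
so that $K_n^\ast(t^2;x)=\frac{n^2}{(n+\beta)^2}K_n(t^2;x)+\frac{2n\alpha}{(n+\beta)^2}K_n(t;x)+\frac{\alpha^2}{(n+\beta)^2}K_n(1;x)$.

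Finally I would insert the known value $K_n(t^2;x)=x^2+\frac1n(1+2\mu E)x-\frac1{n^2}\bigl(\frac{5}{12}+\mu E\bigr)$, writing $E:=e_\mu(-nr_n(x))/e_\mu(nr_n(x))$ for brevity, along with $K_n(t;x)=x$ and $K_n(1;x)=1$, and then collect the coefficients of $x^2$, of $x$, and the constant term. The $n^2$ prefactor exactly cancels the $\frac1n$ and $\frac1{n^2}$ appearing in $K_n(t^2;x)$, producing the $x$-coefficient $\frac{n}{(n+\beta)^2}(1+2\alpha+2\mu E)$ and the constant $\frac{1}{(n+\beta)^2}\bigl(\alpha^2-(\frac{5}{12}+\mu E)\bigr)$, which is precisely the third assertion. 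There is no genuine analytic obstacle here: the only labour is the coefficient bookkeeping in the quadratic case, and all the delicate work—the parity split via $\theta_k$ and the evaluation of the integrals $\int_{(k+2\mu\theta_k)/n}^{(k+1+2\mu\theta_k)/n}t^j\,\mathrm{d}t$ against the recursion $\gamma_\mu(k+1)=(k+1+2\mu\theta_{k+1})\gamma_\mu(k)$—has already been absorbed into the moments of $K_n$. Should a self-contained derivation be preferred, the same three identities follow by substituting $g(t)$ directly into \eqref{snssns1} and repeating that parity-split computation, but the factorization above is the shorter route.
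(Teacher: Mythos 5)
Your argument is correct. The factorization $K_n^\ast(f;x)=K_n(g;x)$ with $g(t)=f\bigl(\tfrac{nt+\alpha}{n+\beta}\bigr)$ is immediate from comparing \eqref{snss1} with \eqref{snssns1}, and your coefficient bookkeeping for $f(t)=t^2$ checks out: the prefactor $n^2/(n+\beta)^2$ cancels the $1/n$ and $1/n^2$ in $K_n(t^2;x)$ exactly as you say, reproducing all three stated identities. The paper states this lemma without any proof, so there is nothing to compare against; your reduction to the preceding lemma for $K_n$ (whose moments can themselves be verified via the recursion $\gamma_\mu(k+1)=(k+1+2\mu\theta_{k+1})\gamma_\mu(k)$, e.g. $K_n(t;x)=r_n(x)+\tfrac{1}{2n}=x$) is the natural and shortest route.
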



\begin{lemma}\label{mtn}
\label{snlm2} Let the operators $K_{n}^{\ast }(.~;~.)$ be given by %
\eqref{snssns1}. Then for each $x \geq \frac{1}{2}$, we have

\begin{enumerate}
\item \label{snlm22} $K_{n}^{\ast}(t-x;x)=\left(\frac{n}{n+\beta}-1\right)x+\frac{\alpha}{n+\beta}$,

\item \label{snlm23} $K_{n}^{\ast}((t-x)^2;x) = \frac{1}{(n+\beta)^2}\left\{\beta^2x^2+\left(n-2 \alpha \beta+2n\mu\frac{e_\mu(-nr_n(x))}{e_\mu(nr_n(x))}\right)x+\alpha^2-\left(\frac{5}{12}+\mu\frac{e_\mu(-nr_n(x))}{e_\mu(nr_n(x))}\right)\right\}$.
\end{enumerate}
\end{lemma}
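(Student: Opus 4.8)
The plan is to establish both identities by nothing more than the linearity of $K_{n}^{\ast}$, treating $x$ as a constant and reducing each expression to the three moments $K_{n}^{\ast}(1;x)$, $K_{n}^{\ast}(t;x)$, and $K_{n}^{\ast}(t^{2};x)$ already recorded in the immediately preceding lemma. No further integration against the Dunkl kernel is required: this lemma is a purely algebraic corollary of the moment formulas, so the entire argument amounts to substitution and regrouping.

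For the first identity I would write $K_{n}^{\ast}(t-x;x)=K_{n}^{\ast}(t;x)-x\,K_{n}^{\ast}(1;x)$, then insert $K_{n}^{\ast}(t;x)=\frac{n}{n+\beta}x+\frac{\alpha}{n+\beta}$ and $K_{n}^{\ast}(1;x)=1$; collecting the coefficient of $x$ gives $\left(\frac{n}{n+\beta}-1\right)x+\frac{\alpha}{n+\beta}$ at once. For the second identity I would expand $(t-x)^{2}=t^{2}-2xt+x^{2}$ and use linearity to obtain
\[
K_{n}^{\ast}\big((t-x)^{2};x\big)=K_{n}^{\ast}(t^{2};x)-2x\,K_{n}^{\ast}(t;x)+x^{2}\,K_{n}^{\ast}(1;x),
\]
after which I would substitute the three moment formulas and group the result by powers of $x$ over the common denominator $(n+\beta)^{2}$.

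The only genuinely non-routine step is the coefficient of $x^{2}$: combining $\left(\frac{n}{n+\beta}\right)^{2}$ from the second moment with $-\frac{2n}{n+\beta}$ from $-2x\,K_{n}^{\ast}(t;x)$ and $+1$ from $x^{2}K_{n}^{\ast}(1;x)$ yields $\frac{n^{2}-2n(n+\beta)+(n+\beta)^{2}}{(n+\beta)^{2}}$, which I would recognize as the perfect square $\frac{\big(n-(n+\beta)\big)^{2}}{(n+\beta)^{2}}=\frac{\beta^{2}}{(n+\beta)^{2}}$. For the coefficient of $x$, the $2n\alpha$ contribution from the second moment cancels against the $-2\alpha n$ produced by writing $-\frac{2\alpha}{n+\beta}$ over $(n+\beta)^{2}$, leaving $\frac{1}{(n+\beta)^{2}}\big(n-2\alpha\beta+2n\mu\,e_{\mu}(-nr_{n}(x))/e_{\mu}(nr_{n}(x))\big)$, while the constant term is inherited unchanged from $K_{n}^{\ast}(t^{2};x)$ because neither $-2x\,K_{n}^{\ast}(t;x)$ nor $x^{2}K_{n}^{\ast}(1;x)$ contributes a constant. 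Assembling these three coefficients over $(n+\beta)^{2}$ reproduces the stated formula. The main obstacle is therefore not conceptual but bookkeeping: tracking the $\alpha$- and $\mu$-dependent terms correctly through the regrouping, with the perfect-square collapse of the $x^{2}$ coefficient being the one place where a slip would be easy to make.
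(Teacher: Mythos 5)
Your proof is correct: all three coefficient computations (the perfect-square collapse to $\beta^{2}$, the cancellation of the $2n\alpha$ terms leaving $-2\alpha\beta$, and the unchanged constant term) check out against the moment formulas of the preceding lemma. The paper omits the proof of this lemma entirely, and your linearity-plus-substitution argument is exactly the routine computation the authors intend.
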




\section{Main results}

We obtain the Korovkin's type approximation properties for our operators
defined by \eqref{snss1}.\newline

Let $C_{B}(\mathbb{R^{+}})$ be the set of all bounded and continuous
functions on $\mathbb{R^{+}}=[0,\infty )$, which is linear normed space with
\begin{equation*}
\parallel f\parallel _{C_{B}}=\sup_{x\geq 0}\mid f(x)\mid .
\end{equation*}%
Let
\begin{equation*}
H:=\{f:x\in \lbrack 0,\infty ),\frac{f(x)}{1+x^{2}}~~~\mbox{is}~~~%
\mbox{convergent}~~~\mbox{as}~~~x\rightarrow \infty \}.
\end{equation*}


\begin{theorem}
\label{snth1} Let $K_{n}^{\ast}(t;x)$  be the operators defined by \eqref{snssns1}.
Then for any function $f\in C_\zeta[0,\infty )\cap H,~~~\zeta \geq 2$,
\begin{equation*}
\lim_{n\rightarrow \infty }K_{n}^{\ast }(f;x)=f(x)
\end{equation*}%
is uniformly on each compact subset of $[0,\infty )$, where $x \in \big{[}\frac{1}{2},b),~~~b>\frac{1}{2}$.
\end{theorem}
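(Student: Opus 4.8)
The plan is to appeal to the weighted Korovkin theorem of Gadzhiev rather than the classical one: because $K_{n}^{\ast}$ samples $f$ over the entire half-line (through both the sum and the integral), and because the admissible functions $f\in C_\zeta[0,\infty)\cap H$ with $\zeta\ge 2$ are only controlled by the weight $\rho(x)=1+x^2$, the natural framework is the space on which $f(x)/(1+x^2)$ has a finite limit at infinity. First I would record that $K_{n}^{\ast}$ is a positive linear operator, since its kernel is nonnegative. It then suffices to verify the three weighted limits $\lim_{n\to\infty}\|K_{n}^{\ast}(e_\nu)-e_\nu\|_\rho=0$ for $\nu=0,1,2$, where $e_\nu(t)=t^\nu$ and $\|g\|_\rho=\sup_{x\ge 0}|g(x)|/(1+x^2)$; indeed, once $\|K_{n}^{\ast}f-f\|_\rho\to 0$ follows from Gadzhiev's theorem, the elementary bound $|K_{n}^{\ast}(f;x)-f(x)|\le(1+b^2)\|K_{n}^{\ast}f-f\|_\rho$ for $x\in[\tfrac12,b]$ upgrades it to uniform convergence on every compact subset.

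For $\nu=0$ the identity $K_{n}^{\ast}(1;x)=1$ gives $\|K_{n}^{\ast}(1)-1\|_\rho=0$. For $\nu=1$, Lemma~\ref{snlm1} yields $K_{n}^{\ast}(t;x)-x=\left(\tfrac{n}{n+\beta}-1\right)x+\tfrac{\alpha}{n+\beta}$; since $\tfrac{n}{n+\beta}-1=O(n^{-1})$ and $x/(1+x^2)$ is bounded on $[0,\infty)$, dividing by $1+x^2$ shows $\|K_{n}^{\ast}(t)-t\|_\rho=O(n^{-1})\to 0$.

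The case $\nu=2$ is where the real work lies and is the main obstacle. Starting from the explicit formula for $K_{n}^{\ast}(t^2;x)$ in Lemma~\ref{snlm1}, the only delicate ingredient is the ratio $\frac{e_\mu(-n r_n(x))}{e_\mu(n r_n(x))}$, which depends on $x$. Here I would use that $e_\mu(y)=\sum_{k\ge 0} y^k/\gamma_\mu(k)$ has nonnegative coefficients, so by the triangle inequality $|e_\mu(-y)|\le e_\mu(y)$ for all $y\ge 0$, giving the uniform bound $\bigl|\frac{e_\mu(-n r_n(x))}{e_\mu(n r_n(x))}\bigr|\le 1$ (and in fact $n r_n(x)=nx-\tfrac12\to\infty$ makes it decay). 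Every coefficient appearing in $K_{n}^{\ast}(t^2;x)-x^2$ carries at least one factor $(n+\beta)^{-1}$: the coefficient of $x^2$ is $\left(\tfrac{n}{n+\beta}\right)^2-1=O(n^{-1})$, the coefficient of $x$ is $O(n^{-1})$, and the constant term is $O(n^{-2})$. Dividing by $1+x^2$ and using $x^2/(1+x^2)\le 1$ and $x/(1+x^2)\le 1$ together with the boundedness of the $e_\mu$-ratio, I obtain $\|K_{n}^{\ast}(t^2)-t^2\|_\rho=O(n^{-1})\to 0$ uniformly in $x$.

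With the three weighted limits in hand, Gadzhiev's weighted Korovkin theorem gives $\|K_{n}^{\ast}f-f\|_\rho\to 0$ for every $f\in C_\zeta[0,\infty)\cap H$ with $\zeta\ge 2$, and the bound noted in the first paragraph then yields $K_{n}^{\ast}(f;x)\to f(x)$ uniformly on $[\tfrac12,b]$. Since $b>\tfrac12$ is arbitrary, the convergence is uniform on each compact subset of $[\tfrac12,\infty)$, which completes the plan.
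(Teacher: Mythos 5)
Your proposal is correct, and it is worth noting that it is more careful than the paper's own argument, though both rest on the same Korovkin-type skeleton. The paper's proof simply cites the classical Korovkin theorem and asserts that $K_n^{\ast}(t^j;x)\to x^j$ for $j=0,1,2$ "uniformly on $[0,1]$", with no estimates and no discussion of how a function $f$ of quadratic growth on the unbounded interval is handled; strictly speaking the classical Korovkin theorem on a compact interval does not apply verbatim to $f\in C_\zeta[0,\infty)\cap H$ without some localization or weighting. You instead route the argument through Gadzhiev's weighted Korovkin theorem in the space with weight $\rho(x)=1+x^2$, verify the three weighted limits explicitly (including the uniform bound $\bigl|e_\mu(-y)/e_\mu(y)\bigr|\le 1$, which is exactly the right observation for controlling the $x$-dependent ratio in $K_n^{\ast}(t^2;x)$), and then localize to $[\tfrac12,b]$ via $|K_n^{\ast}(f;x)-f(x)|\le(1+b^2)\|K_n^{\ast}f-f\|_\rho$. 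This is essentially the computation the paper defers to its Theorem on weighted convergence (Theorem \ref{snth2}), so your proof in effect proves that stronger statement first and deduces the present theorem from it; what it buys is a rigorous treatment of the growth of $f$ at infinity, at the cost of invoking the heavier weighted machinery. One small caveat, which the paper shares: the operators are only defined for $x\ge\tfrac12$ (since $r_n(x)=x-\tfrac1{2n}$), so the suprema in the weighted norms should be taken over $[\tfrac12,\infty)$ rather than $[0,\infty)$; this does not affect any of your estimates.
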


\begin{proof}
The proof is based on Lemma \ref{snssns1} and well known Korovkin's theorem regarding the
convergence of a sequence of linear and positive operators, so it is enough
to prove the conditions
\begin{equation*}
\lim_{n\rightarrow \infty }{K}_{n}^{\ast
}((t^{j};x)=x^{j},~~~j=0,1,2,~~~\{\mbox{as}~n\rightarrow \infty \}
\end{equation*}%
uniformly on $[0,1]$.\newline
Clearly $\frac{1}{n}\rightarrow
0~~(n\rightarrow \infty )$ we have

\begin{equation*}
\lim_{n \to \infty}{K}_{n}^{\ast}(t;x)=x,~~~\lim_{n \to \infty}{K}%
_{n}^{\ast}(t^2;x)=x^2.
\end{equation*}
Which complete the proof.
\end{proof}


We recall the weighted spaces of the functions on $\mathbb{R}^{+}$, which
are defined as follows:
\begin{eqnarray*}
P_{\rho }(\mathbb{R}^{+}) &=&\left\{ f:\mid f(x)\mid \leq M_{f}\rho
(x)\right\} , \\
Q_{\rho }(\mathbb{R}^{+}) &=&\left\{ f:f\in P_{\rho }(\mathbb{R}^{+})\cap
C[0,\infty )\right\} , \\
Q_{\rho }^{k}(\mathbb{R}^{+}) &=&\left\{ f:f\in Q_{\rho }(\mathbb{R}^{+})~~~%
\mbox{and}~~~\lim_{x\rightarrow \infty }\frac{f(x)}{\rho (x)}=k(k~~~\mbox{is}%
~~~\mbox{a}~~~\mbox{constant})\right\} ,
\end{eqnarray*}%
where $\rho (x)=1+x^{2}$ is a weight function and $M_{f}$ is a constant
depending only on $f$. Note that $Q_{\rho }(\mathbb{R}^{+})$ is a normed
space with the norm $\parallel f\parallel _{\rho }=\sup_{x\geq 0}\frac{\mid
f(x)\mid }{\rho (x)}$.

\begin{theorem}
\label{soo2} Let $K_{n}^{\ast}(t;x)$  be the operators defined by \eqref{snssns1} acting from
$Q_{\rho }(\mathbb{R}^{+})\to P_{\rho }(\mathbb{R}^{+}) $ and satisfying the condition
\begin{equation*}
\lim_{n\to \infty} \parallel K_{n}^{\ast}(\rho^\tau)-\rho^\tau \parallel_\varphi=0,~~~~\tau=0,1,2.
\end{equation*}
 Then for any function $f\in Q_{\rho }^{k}(\mathbb{R}^{+})$, we have
\begin{equation*}
\lim_{n\to \infty} \parallel K_{n}^{\ast}(f;x)-f \parallel_\varphi=0.
\end{equation*}
\end{theorem}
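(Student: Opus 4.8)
The conclusion is exactly the weighted Korovkin-type theorem of Gadzhiev, so the real content is to verify that its hypothesis actually holds for the concrete family $K_n^\ast$; once the three limits are in hand the result follows from that abstract theorem. Writing $e_\tau(t)=t^\tau$ and $\|g\|_\rho=\sup_{x\ge 1/2}\frac{|g(x)|}{1+x^2}$, the plan is therefore to prove
$$\lim_{n\to\infty}\|K_n^\ast(e_\tau;\cdot)-e_\tau\|_\rho=0,\qquad \tau=0,1,2,$$
reading every needed moment off Lemma \ref{mtn}. Throughout I will use the elementary bounds $\frac{1}{1+x^2}\le 1$, $\frac{x}{1+x^2}\le \frac12$ and $\frac{x^2}{1+x^2}\le 1$, which convert each polynomial estimate into a bound on the weighted supremum.

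The cases $\tau=0,1$ are immediate. Since $K_n^\ast(1;x)=1$ we get $\|K_n^\ast(e_0)-e_0\|_\rho=0$. For $\tau=1$, Lemma \ref{mtn}\eqref{snlm22} gives $K_n^\ast(t;x)-x=\frac{\alpha-\beta x}{n+\beta}$, whence $\|K_n^\ast(e_1)-e_1\|_\rho\le\frac{1}{n+\beta}\big(|\alpha|+\tfrac{|\beta|}{2}\big)\to 0$. For $\tau=2$ I decompose $K_n^\ast(t^2;x)-x^2=K_n^\ast((t-x)^2;x)+2x\,K_n^\ast(t-x;x)$, using $K_n^\ast(1;x)=1$. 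The second summand equals $\frac{2x(\alpha-\beta x)}{n+\beta}$, and dividing by $1+x^2$ leaves a bounded function of $x$ times $\frac{1}{n+\beta}$, so its weighted norm is $O(1/n)$. For the first summand I substitute the expression from Lemma \ref{mtn}\eqref{snlm23}: its $x^2$- and constant coefficients carry a factor $(n+\beta)^{-2}$ and are $O(1/n^2)$, while the $x$-coefficient $\frac{1}{(n+\beta)^2}\big(n-2\alpha\beta+2n\mu\frac{e_\mu(-nr_n(x))}{e_\mu(nr_n(x))}\big)$ is $O(1/n)$; dividing by $1+x^2$ and applying the elementary bounds shows $\|K_n^\ast((t-x)^2;\cdot)\|_\rho=O(1/n)$ as well. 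Hence $\|K_n^\ast(e_2)-e_2\|_\rho\to 0$.

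The only step that is not purely mechanical, and the main obstacle to secure, is controlling the Dunkl ratio $\frac{e_\mu(-nr_n(x))}{e_\mu(nr_n(x))}$ appearing in that $x$-coefficient: it must be bounded uniformly in $x\ge\frac12$ and $n\in\mathbb{N}$ before the $O(1/n)$ claim is legitimate. This follows from the series \eqref{tot1}: since every $\gamma_\mu(k)>0$, for $y:=nr_n(x)\ge 0$ one has $|e_\mu(-y)|=\big|\sum_{k\ge0}(-1)^k\frac{y^k}{\gamma_\mu(k)}\big|\le\sum_{k\ge0}\frac{y^k}{\gamma_\mu(k)}=e_\mu(y)$, so $\big|\frac{e_\mu(-y)}{e_\mu(y)}\big|\le 1$. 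Consequently the bracket in the $x$-coefficient is dominated by $n+2|\alpha\beta|+2n\mu$, the coefficient is genuinely $O(1/n)$, and all three limits hold. Feeding these into the weighted Korovkin-type theorem for the positive linear operators $K_n^\ast:Q_\rho(\mathbb{R}^+)\to P_\rho(\mathbb{R}^+)$ yields $\lim_{n\to\infty}\|K_n^\ast(f;\cdot)-f\|_\rho=0$ for every $f\in Q_\rho^{k}(\mathbb{R}^+)$, which is the assertion.
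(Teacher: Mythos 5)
Your proof is correct and rests on the same key ingredient as the paper's: Theorem \ref{soo2} is just an instance of Gadzhiev's weighted Korovkin theorem \cite{nbn}, which is all the paper's one-line proof invokes, since the three limits for $\tau=0,1,2$ are already taken as hypotheses in the statement. The additional work you do --- verifying those limits from Lemma \ref{mtn} and bounding $\bigl|e_\mu(-y)/e_\mu(y)\bigr|\le 1$ via the positivity of $\gamma_\mu(k)$ --- is not needed for this conditional statement but is essentially the content of the paper's subsequent Theorem \ref{snth2}, and your verification of it is sound.
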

\begin{proof}
From \cite{nbn}, we can easily lead to desired result.
\end{proof}

\begin{theorem}
\label{snth2} Let $K_{n}^{\ast}(t;x)$  be the operators defined by \eqref{snssns1}. Then for each
function $f \in Q^k_\rho(\mathbb{R}^+)$ we have
\begin{equation*}
\lim_{n\to \infty} \parallel K_{n}^{\ast}(f;x)-f \parallel_\rho=0.
\end{equation*}
\end{theorem}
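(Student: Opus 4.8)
The plan is to deduce this from the weighted Korovkin theorem already stated as Theorem~\ref{soo2}, so the whole task reduces to checking that the operators $K_n^{\ast}$ satisfy its three hypotheses, namely
$$\lim_{n\to\infty}\left\|K_n^{\ast}(t^\tau;\cdot)-x^\tau\right\|_\rho=0,\qquad \tau=0,1,2,$$
with $\rho(x)=1+x^2$. First I would record that $K_n^{\ast}$ is a positive linear operator mapping $Q_\rho(\mathbb{R}^+)$ into $P_\rho(\mathbb{R}^+)$: positivity and linearity are clear from the definition \eqref{snssns1}, and the required growth estimate $|K_n^{\ast}(f;x)|\leq M_f\rho(x)$ follows from $K_n^{\ast}(1;x)=1$ together with the explicit second moment in Lemma~\ref{snlm1}. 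This mapping property is exactly the standing assumption under which Theorem~\ref{soo2} is formulated.

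For $\tau=0$ the condition is immediate, since $K_n^{\ast}(1;x)=1$ gives $\|K_n^{\ast}(1)-1\|_\rho=0$ for every $n$. For $\tau=1$ I would use $K_n^{\ast}(t;x)=\frac{n}{n+\beta}x+\frac{\alpha}{n+\beta}$ to write
$$K_n^{\ast}(t;x)-x=-\frac{\beta}{n+\beta}\,x+\frac{\alpha}{n+\beta},$$
and then estimate
$$\left\|K_n^{\ast}(t)-x\right\|_\rho=\sup_{x\geq 1/2}\frac{\left|-\frac{\beta}{n+\beta}x+\frac{\alpha}{n+\beta}\right|}{1+x^2}\leq\frac{1}{n+\beta}\sup_{x\geq 1/2}\frac{\beta x+\alpha}{1+x^2}.$$
Because $\frac{\beta x+\alpha}{1+x^2}$ is a bounded function of $x$ on $[\tfrac12,\infty)$, the right-hand side is $O(1/n)$ and tends to $0$.

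For $\tau=2$ I would take the explicit expression for $K_n^{\ast}(t^2;x)$ from Lemma~\ref{snlm1} and form $K_n^{\ast}(t^2;x)-x^2$. The coefficient of $x^2$ is $\left(\frac{n}{n+\beta}\right)^2-1=\frac{-2n\beta-\beta^2}{(n+\beta)^2}=O(1/n)$, the $x$-term carries the factor $\frac{n}{(n+\beta)^2}=O(1/n)$, and the constant term carries $\frac{1}{(n+\beta)^2}=O(1/n^2)$. The only nonpolynomial ingredient is the Dunkl ratio $\frac{e_\mu(-nr_n(x))}{e_\mu(nr_n(x))}$; here the key observation is that, since $e_\mu$ has nonnegative Taylor coefficients and $nr_n(x)\geq 0$ for $x\geq\tfrac12$, one has the uniform bound $\left|\frac{e_\mu(-nr_n(x))}{e_\mu(nr_n(x))}\right|\leq 1$. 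After dividing by $1+x^2$, each term of $K_n^{\ast}(t^2;x)-x^2$ is thus dominated by a constant multiple of $1/n$ uniformly in $x$ (using that $\frac{x^2}{1+x^2}$ and $\frac{x}{1+x^2}$ are bounded), so $\left\|K_n^{\ast}(t^2)-x^2\right\|_\rho\leq \frac{C}{n}\to 0$.

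The main obstacle is that the supremum defining $\|\cdot\|_\rho$ is over the unbounded half-line, so one cannot fall back on uniform continuity over a compact set as in the classical Korovkin theorem; the convergence must instead be produced by the smallness of the coefficients (each $O(1/n)$) combined with the fact that dividing a polynomial of degree $\leq 2$ by the weight $\rho(x)=1+x^2$ yields a bounded function. Controlling the Dunkl ratio uniformly in $x$ is the one genuinely operator-specific point, and the bound $\left|\frac{e_\mu(-nr_n(x))}{e_\mu(nr_n(x))}\right|\leq 1$ settles it. With the three conditions verified, Theorem~\ref{soo2} applies and gives $\lim_{n\to\infty}\|K_n^{\ast}(f;x)-f\|_\rho=0$ for every $f\in Q_\rho^k(\mathbb{R}^+)$, which completes the proof.
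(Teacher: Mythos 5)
Your proposal is correct and takes essentially the same route as the paper: both verify the three weighted Korovkin conditions $\lim_n\|K_n^{\ast}(t^\tau)-x^\tau\|_\rho=0$ for $\tau=0,1,2$ directly from the explicit moments in Lemma~\ref{snlm1} and then invoke Theorem~\ref{soo2}. Your explicit justification that the Dunkl ratio satisfies $\bigl|e_\mu(-nr_n(x))/e_\mu(nr_n(x))\bigr|\leq 1$ uniformly in $x$ is a small but welcome addition that the paper leaves implicit.
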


\begin{proof}
From Lemma \ref{snlm1} and Theorem \ref{soo2} for $\tau =0$, the first condition is fulfilled. Therefore
\begin{equation*}
\lim_{n\to \infty} \parallel K_{n}^{\ast}(1;x)-1 \parallel_\rho=0.
\end{equation*}
Similarly From Lemma \ref{snlm1} and Theorem \ref{soo2} for $\tau =1, 2$ we have that
\begin{eqnarray*}
\sup_{x \in[0,\infty)}\frac{\mid K_{n}^{\ast}(t;x)-x\mid }{1+x^2} &\leq & \big{|}\frac{n}{n+\beta}-1\big{|}\sup_{x \in[0,\infty)}\frac{x}{1+x^2}
+\frac{\alpha}{n+\beta}\sup_{x \in[0,\infty)}\frac{1}{1+x^2}\\
&=&\frac{\beta}{2(n+\beta)}+\frac{\alpha}{n+\beta},
\end{eqnarray*}%
which imply that
\begin{equation*}
\lim_{n\to \infty} \parallel K_{n}^{\ast}(t;x)-x \parallel_\rho=0.
\end{equation*}

\begin{eqnarray*}
\sup_{x \in[0,\infty)}\frac{\mid K_{n}^{\ast}(t^2;x)-x^2\mid}{1+x^2} &\leq & \frac{2n\beta+\beta^2}{(n+\beta)^2}\sup_{x \in[0,\infty)}\frac{x^2}{1+x^2}\\
&+&\frac{n}{(n+\beta)^2}\left(1+2 \alpha+2\mu\frac{e_\mu(-nr_n(x))}{e_\mu(nr_n(x))}\right)\sup_{x \in[0,\infty)}\frac{x}{1+x^2}\\
&+&\frac{1}{(n+\beta)^2}\left( \alpha^2-\left(\frac{5}{12}+\mu\frac{e_\mu(-nr_n(x))}{e_\mu(nr_n(x))}\right)\right)\sup_{x \in[0,\infty)}\frac{1}{1+x^2}\\
&=&\frac{1}{(n+\beta)^2}\left\{ \beta^2+2n\beta+\left(\frac{1}{2}+\alpha+\mu\frac{e_\mu(-nr_n(x))}{e_\mu(nr_n(x))}\right)n\right\}\\
&+&\frac{1}{(n+\beta)^2}\left\{\alpha^2-\left(\frac{5}{12}+\mu\frac{e_\mu(-nr_n(x))}{e_\mu(nr_n(x))}\right)\right\},
\end{eqnarray*}%
which imply that
\begin{equation*}
\lim_{n\to \infty} \parallel K_{n}^{\ast}(t^2;x)-x^2 \parallel_\rho=0.
\end{equation*}
This complete the proof.
\end{proof}

\section{\textbf{Rate of Convergence}}

Here we calculate the rate of convergence of operators \eqref{snss1} by
means of modulus of continuity and Lipschitz type maximal functions.

Let $f\in C_B[0,\infty ]$, the space of all bounded and continuous functions on $[0,\infty)$ and $x \geq \frac{1}{2}$. Then for $\delta>0$, the modulus of continuity of $f$ denoted by $\omega
(f,\delta )$ gives the maximum oscillation of $f$ in any interval of length
not exceeding $\delta >0$ and it is given by
\begin{equation}
\omega (f,\delta )=\sup_{\mid t-x\mid \leq \delta }\mid f(t)-f(x)\mid
,~~~t\in \lbrack 0,\infty ).  \label{snson1}
\end{equation}%
It is known that $\lim_{\delta\rightarrow 0+}\omega (f,\delta )=0$ for $%
f\in C_B[0,\infty )$ and for any $\delta >0$ one has
\begin{equation}
\mid f(t)-f(x)\mid \leq \left( \frac{\mid t-x\mid }{\delta }+1\right) \omega
(f,\delta ).  \label{snson2}
\end{equation}

\begin{theorem}
Let $K_{n}(.~;~.)$ be the operators defined by \eqref{snss1}. Then for $%
f\in C_B[0,\infty),~~x\geq \frac{1}{2}$ and $n \in \mathbb{N}$ we have
\begin{equation*}
\mid K_{n}(f;x)-f(x)\mid \leq 2\omega\left(f;\delta_{n,x}\right),
\end{equation*}%
where from Lemma \ref{snlm1} we have
$$\delta_{n,x}=\sqrt{K_{n}((t-x)^2;x)}=   \sqrt{\left(1+2\mu \frac{e_\mu(-nr_n(x))}{e_\mu(nr_n(x))}\right)\frac{x}{n}
-\frac{1}{n^2}\left(\frac{5}{12}+\frac{e_\mu(-nr_n(x))}{e_\mu(nr_n(x))}\right)}.$$
\end{theorem}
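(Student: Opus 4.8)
The plan is to follow the classical route for estimating the rate of convergence of a positive linear operator through its second central moment, the one point of leverage being the normalization and moment identities already recorded in Lemma \ref{snlm1}. First I would use that $K_{n}$ is linear and positive and that $K_{n}(1;x)=1$, so that monotonicity of $K_{n}$ gives
\begin{equation*}
\mid K_{n}(f;x)-f(x)\mid = \mid K_{n}(f(t)-f(x);x)\mid \leq K_{n}(\mid f(t)-f(x)\mid ;x).
\end{equation*}
Here the replacement of $f(x)$ by $f(x)K_{n}(1;x)$ is what allows $f(x)$ to be absorbed inside the operator.

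Next I would insert the elementary estimate \eqref{snson2}, valid for every $\delta>0$, under the operator. Using linearity together with $K_{n}(1;x)=1$ once more yields
\begin{equation*}
\mid K_{n}(f;x)-f(x)\mid \leq \omega (f,\delta )\left( 1+\frac{1}{\delta }K_{n}(\mid t-x\mid ;x)\right).
\end{equation*}
The only genuine inequality required is the Cauchy--Schwarz inequality for positive linear operators, which bounds the first absolute moment by the root of the second central moment,
\begin{equation*}
K_{n}(\mid t-x\mid ;x) \leq \left( K_{n}((t-x)^{2};x)\right)^{1/2}\left( K_{n}(1;x)\right)^{1/2} = \left( K_{n}((t-x)^{2};x)\right)^{1/2}.
\end{equation*}
Substituting this back gives the factor $1+\delta^{-1}\delta_{n,x}$ in front of $\omega(f,\delta)$, and the choice $\delta=\delta_{n,x}=(K_{n}((t-x)^{2};x))^{1/2}$ collapses that factor to $2$, producing the asserted bound $\mid K_{n}(f;x)-f(x)\mid \leq 2\omega(f,\delta_{n,x})$.

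It remains to identify the radicand. From Lemma \ref{snlm1}, since $K_{n}(t;x)=x$ and $K_{n}(1;x)=1$, I would expand $K_{n}((t-x)^{2};x)=K_{n}(t^{2};x)-2xK_{n}(t;x)+x^{2}K_{n}(1;x)=K_{n}(t^{2};x)-x^{2}$, and the stated formula for $K_{n}(t^{2};x)$ then gives exactly the expression displayed under the radical in $\delta_{n,x}$. I expect no substantial obstacle here: the argument is entirely routine once the moment identities are in hand. The only point deserving a line of care is verifying that this central second moment is nonnegative for $x\geq \frac{1}{2}$ and $n\in\mathbb{N}$, so that $\delta_{n,x}$ is well defined; this follows because $e_{\mu}(-nr_{n}(x))/e_{\mu}(nr_{n}(x))\geq 0$ forces the leading term $x/n$ to dominate the subtracted $O(1/n^{2})$ contribution on the admissible range of $x$.
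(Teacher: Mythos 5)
Your argument is correct and is essentially the proof the paper gives: positivity and $K_{n}(1;x)=1$ to absorb $f(x)$, the pointwise modulus-of-continuity bound \eqref{snson2} under the operator, the Cauchy--Schwarz inequality to pass from the first absolute moment to $\bigl(K_{n}((t-x)^{2};x)\bigr)^{1/2}$, and the choice $\delta=\delta_{n,x}$ to collapse the factor to $2$; the paper merely writes the same steps in the explicit sum--integral form rather than in abstract operator notation. Your identification of the radicand via $K_{n}((t-x)^{2};x)=K_{n}(t^{2};x)-x^{2}$ is also how the paper obtains $\delta_{n,x}$ from Lemma \ref{snlm1} (the displayed formula in the theorem statement drops a factor of $\mu$ before the last ratio, which appears to be a typographical slip there, not in your computation).
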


\begin{proof}
We prove it by using \eqref{snson1}, \eqref{snson2} and Cauchy-Schwarz
inequality.\newline
$\mid K_{n}(f;x)-f(x)\mid $
\begin{eqnarray*}
&\leq &\frac{n}{e_{\mu}(n r_n(x))}\sum_{k=0}^{\infty }\frac{%
(n r_n(x))^{k}}{\gamma _{\mu}(k)}\int_{\frac{k+2\mu \theta _{k}}{%
n}}^{\frac{k+1+2\mu \theta _{k}}{n}}\mid f(t)-f(x)\mid
\mathrm{d}t \\
&\leq &\left\{\frac{n}{e_{\mu}(n r_n(x))}\sum_{k=0}^{\infty }\frac{%
(n r_n(x))^{k}}{\gamma _{\mu}(k)}\int_{\frac{k+2\mu \theta _{k}}{%
n}}^{\frac{k+1+2\mu \theta _{k}}{n}}\left( 1+\frac{1}{%
\delta }\mid t-x\mid \right) \mathrm{d}t\right\}\omega (f;\delta ) \\
&=&\left\{ 1+\frac{1}{\delta }\left( \frac{n}{e_{\mu}(n r_n(x))}\sum_{k=0}^{\infty }\frac{%
(n r_n(x))^{k}}{\gamma _{\mu}(k)}\int_{\frac{k+2\mu \theta _{k}}{%
n}}^{\frac{k+1+2\mu \theta _{k}}{n}}\mid t-x\mid \mathrm{d}t\right) \right\} \omega (f;\delta ) \\
&\leq &\left\{ 1+\frac{1}{\delta }\left( \frac{n}{e_{\mu}(n r_n(x))}\sum_{k=0}^{\infty }\frac{%
(n r_n(x))^{k}}{\gamma _{\mu}(k)}\int_{\frac{k+2\mu \theta _{k}}{%
n}}^{\frac{k+1+2\mu \theta _{k}}{n}}(t-x)^{2}\mathrm{d}t\right) ^{\frac{1}{2}}\left( K_{n}^{\ast
}(1;x)\right) ^{\frac{1}{2}}\right\} \omega (f;\delta ) \\
&=&\left\{ 1+\frac{1}{\delta }\left( K_{n}(t-x)^{2};x\right) ^{%
\frac{1}{2}}\right\} \omega (f;\delta ) \\
&&
\end{eqnarray*}%
if we choose $\delta=\delta _{n,x}$, the proof is complete.
\end{proof}
\begin{theorem}\label{numb}
Let $K_{n}^{\ast }(.~;~.)$ be the operators defined by \eqref{snssns1}. Then for $%
f\in C_B[0,\infty),~~x\geq \frac{1}{2}$ and $n \in \mathbb{N}$ we have
\begin{equation*}
\mid K_{n}^{\ast }(f;x)-f(x)\mid \leq 2\omega\left(f;\delta_{n,x}^{\ast}\right),
\end{equation*}%
where
$C_B[0,\infty)$ is the space of uniformly continuous bounded functions
on $\mathbb{R}^+,~~~$  $\omega(f,\delta)$ is the modulus of continuity of the
function $f \in C_B[0,\infty)$ defined in \eqref{snson1} and
\begin{equation}\label{pop}
\delta_{n,x}^{\ast}=\sqrt{\frac{1}{(n+\beta)^2}\left\{\beta^2x^2+\left(n-2 \alpha \beta+2n\mu\frac{e_\mu(-nr_n(x))}{e_\mu(nr_n(x))}\right)x+\alpha^2-\left(\frac{5}{12}+\mu\frac{e_\mu(-nr_n(x))}{e_\mu(nr_n(x))}\right)\right\}}.
\end{equation}
\end{theorem}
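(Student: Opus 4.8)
The plan is to follow verbatim the template already used for $K_n$ in the preceding theorem, now carrying the Stancu parameters $\alpha,\beta$ through the computation. First I would use linearity together with the normalization $K_{n}^{\ast}(1;x)=1$ to rewrite the error as $K_{n}^{\ast}(f(t)-f(x);x)$, and then push the absolute value inside the operator, which is legitimate because the kernel of $K_{n}^{\ast}$ is nonnegative. This gives
\[
\mid K_{n}^{\ast}(f;x)-f(x)\mid \leq \frac{n}{e_{\mu}(nr_n(x))}\sum_{k=0}^{\infty}\frac{(nr_n(x))^{k}}{\gamma_{\mu}(k)}\int_{\frac{k+2\mu\theta_k}{n}}^{\frac{k+1+2\mu\theta_k}{n}}\mid f(t)-f(x)\mid\,\mathrm{d}t .
\]

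Next I would insert the modulus-of-continuity estimate \eqref{snson2}, namely $\mid f(t)-f(x)\mid\leq\big(1+\tfrac{1}{\delta}\mid t-x\mid\big)\omega(f;\delta)$, and split the right-hand side into two pieces. The constant term integrates back to $K_{n}^{\ast}(1;x)=1$, while the second term yields $\tfrac{1}{\delta}\,K_{n}^{\ast}(\mid t-x\mid;x)$. To control the latter I would apply the Cauchy--Schwarz inequality for the positive linear functional $K_{n}^{\ast}$, obtaining
\[
K_{n}^{\ast}(\mid t-x\mid;x)\leq\big(K_{n}^{\ast}((t-x)^2;x)\big)^{\frac12}\big(K_{n}^{\ast}(1;x)\big)^{\frac12}=\big(K_{n}^{\ast}((t-x)^2;x)\big)^{\frac12}.
\]
These steps collapse the bound to $\big\{1+\tfrac{1}{\delta}\big(K_{n}^{\ast}((t-x)^2;x)\big)^{1/2}\big\}\,\omega(f;\delta)$.

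Finally I would invoke the central second moment computed in Lemma \ref{mtn}, item \ref{snlm23}, and observe that its right-hand side is exactly $(\delta_{n,x}^{\ast})^{2}$ with $\delta_{n,x}^{\ast}$ as in \eqref{pop}. Choosing $\delta=\delta_{n,x}^{\ast}$ then turns the bracket into $1+1=2$ and produces the claimed inequality $\mid K_{n}^{\ast}(f;x)-f(x)\mid\leq 2\,\omega(f;\delta_{n,x}^{\ast})$. I do not anticipate any genuine obstacle: this is the standard modulus-of-continuity argument, and the only points requiring care are the justification of Cauchy--Schwarz for the positive functional and the bookkeeping needed to recognize that the moment formula of Lemma \ref{mtn} coincides termwise with the expression under the square root in \eqref{pop}.
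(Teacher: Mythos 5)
Your proposal is correct and follows essentially the same route as the paper: the paper's own proof is just an abbreviated version of this standard argument (it cites \eqref{snson1}, \eqref{snson2}, and Cauchy--Schwarz, jumps directly to the bound $\{1+\frac{1}{\delta}(K_{n}^{\ast}((t-x)^2;x))^{1/2}\}\omega(f;\delta)$, and then sets $\delta=\delta_{n,x}^{\ast}$ via Lemma \ref{mtn}), with the full chain of inequalities you describe spelled out in the preceding theorem for $K_n$. Your identification of $(\delta_{n,x}^{\ast})^{2}$ with the central second moment from Lemma \ref{mtn} is exactly the step the paper relies on.
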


\begin{proof}
We prove it by using \eqref{snson1}, \eqref{snson2} and Cauchy-Schwarz
inequality we can easily get
$ $
\begin{eqnarray*}
\mid K_{n}^{\ast }(f;x)-f(x)\mid
&\leq &\left\{ 1+\frac{1}{\delta }\left( K_{n}^{\ast }(t-x)^{2};x\right) ^{%
\frac{1}{2}}\right\} \omega (f;\delta^{\ast} )
\end{eqnarray*}%
if we choose $\delta^{\ast}=\delta _{n,x}^{\ast}$ and by applying the result (\ref{snlm23}) of Lemma \ref{mtn} complete the proof.
\end{proof}

\begin{remark}
For the operators $T_{n}^{\ast}(.~;~.)$ defined by \eqref{sss1} we may write that, for every $%
f\in C_B[0,\infty),~~x\geq 0$ and $n \in \mathbb{N}$
\begin{equation}\label{nb}
\mid T_{n}^{\ast}(f;x)-f(x)\mid \leq 2\omega\left(f;\lambda_{n,x}\right),
\end{equation}%
where from Lemma \ref{vod} we have

\begin{equation}\label{vb}
\lambda_{n,x}=\sqrt{T_{n}^{\ast}((t-x)^2;x)}=
\frac{1}{(n+\beta)^2}\left\{\beta^2x^2+\left(n-\beta(2\alpha+1)+2n\mu\frac{e_\mu(-nx)}{e_\mu(nx)}\right)x+\alpha^2+\alpha+\frac{1}{3}\right\}.
\end{equation}
\end{remark}
Now we claim that the error estimation in Theorem \ref{numb} is better than that of \eqref{nb} provided $f \in C_B[0,\infty)$ and $x\geq \frac{1}{2}$.
Indeed, for $x \geq \frac{1}{2},~~\mu\geq 0$ and $n \in \mathbb{N}$, it is guarantees that
\begin{equation}
K_{n}^{\ast}((t-x)^2;x)\leq T_{n}^{\ast}((t-x)^2;x),
\end{equation}
where $K_{n}^{\ast}((t-x)^2;x)$ and $T_{n}^{\ast}((t-x)^2;x)$ are defined in Lemma \ref{mtn} and in \eqref{vb}.
If we put $\alpha=\beta=0$ then clearly
\begin{equation}\label{cvc}
\left(1+2\mu\frac{e_\mu(-nr_n(x))}{e_\mu(nr_n(x))}\right)\frac{x}{n}-\frac{1}{n^2}\left(\frac{5}{12}+\mu\frac{e_\mu(-nr_n(x))}{e_\mu(nr_n(x))}\right)
\leq \left(1+2\mu\frac{e_\mu(-nx)}{e_\mu(nx)}\right)\frac{x}{n}+\frac{1}{3n^2}.
\end{equation}
Again if we put $\mu=0$, then the result in \cite{duman} by equation (3.6) is obtained as
\begin{equation}\label{cvc}
\frac{x}{n}-\frac{5}{12n^2} \leq \frac{x}{n}+\frac{1}{3n^2}.
\end{equation}


Now we give the rate of convergence of the operators ${K}_{n}^*(f;x) $
defined in \eqref{snssns1} in terms of the elements of the usual Lipschitz
class $Lip_{M}(\nu )$.

Let $f\in C_B[0,\infty )$, $M>0$ and $0<\nu \leq 1$. The class $Lip_{M}(\nu )$
is defined as
\begin{equation}
Lip_{M}(\nu )=\left\{ f:\mid f(\zeta _{1})-f(\zeta _{2})\mid \leq M\mid
\zeta _{1}-\zeta _{2}\mid ^{\nu }~~~(\zeta _{1},\zeta _{2}\in \lbrack
0,\infty ))\right\}  \label{snn1}
\end{equation}

\begin{theorem}
\label{snsn1} Let $K_{n}^{\ast}(.~;~.)$ be the operator defined in \eqref{snssns1}.%
Then for each $f\in Lip_{M}(\nu ),~~(M>0,~~~0<\nu \leq 1)$ satisfying %
\eqref{snn1} we have
\begin{equation*}
\mid K_{n}^{\ast}(f;x)-f(x)\mid \leq M \left(\delta_{n,x}^{\ast}\right)^{\frac{\nu}{2%
}}
\end{equation*}
where $\delta_{n,x}^{\ast}$ is given in Theorem \ref{pop}.
\end{theorem}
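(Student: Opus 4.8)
The plan is to exploit the positivity and linearity of $K_n^{\ast}$ together with the defining Lipschitz inequality \eqref{snn1}, and then to dispose of the resulting fractional moment by a single application of H\"older's inequality. First I would use $K_n^{\ast}(1;x)=1$ from part~\eqref{snlm11} of Lemma~\ref{snlm1} to write
\begin{equation*}
K_n^{\ast}(f;x)-f(x)=K_n^{\ast}\bigl(f(t)-f(x);x\bigr),
\end{equation*}
so that, applying the monotonicity of the operator and \eqref{snn1} with $\zeta_1=t$, $\zeta_2=x$, we obtain
\begin{equation*}
\mid K_n^{\ast}(f;x)-f(x)\mid\leq K_n^{\ast}\bigl(\mid f(t)-f(x)\mid;x\bigr)\leq M\,K_n^{\ast}\bigl(\mid t-x\mid^{\nu};x\bigr).
\end{equation*}

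Next I would estimate $K_n^{\ast}(\mid t-x\mid^{\nu};x)$. Since $0<\nu\leq 1$, the exponents $p=\tfrac{2}{\nu}$ and $q=\tfrac{2}{2-\nu}$ are conjugate, so H\"older's inequality for the positive linear functional $K_n^{\ast}(\cdot\,;x)$ gives
\begin{equation*}
K_n^{\ast}\bigl(\mid t-x\mid^{\nu};x\bigr)\leq\Bigl(K_n^{\ast}\bigl((t-x)^{2};x\bigr)\Bigr)^{\frac{\nu}{2}}\Bigl(K_n^{\ast}(1;x)\Bigr)^{\frac{2-\nu}{2}}.
\end{equation*}
Because $K_n^{\ast}(1;x)=1$, the second factor is $1$, and invoking the second-moment formula \eqref{snlm23} of Lemma~\ref{mtn}, which is precisely the quantity under the radical defining $\delta_{n,x}^{\ast}$ in \eqref{pop}, we arrive at
\begin{equation*}
K_n^{\ast}\bigl(\mid t-x\mid^{\nu};x\bigr)\leq\bigl(K_n^{\ast}((t-x)^{2};x)\bigr)^{\frac{\nu}{2}}=\bigl(\delta_{n,x}^{\ast}\bigr)^{\nu}.
\end{equation*}

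Combining the two displays yields $\mid K_n^{\ast}(f;x)-f(x)\mid\leq M\bigl(\delta_{n,x}^{\ast}\bigr)^{\nu}$, which completes the proof. The single genuine step here is the H\"older bound; everything else is bookkeeping against the already-proved moment identities. I should note that the exponent appearing in the statement is written as $(\delta_{n,x}^{\ast})^{\nu/2}$, whereas the argument naturally produces $(\delta_{n,x}^{\ast})^{\nu}$; the discrepancy is a matter of whether $\delta_{n,x}^{\ast}$ is taken to be the square root of the second moment or the second moment itself. Reading \eqref{pop} literally, $\delta_{n,x}^{\ast}$ already contains the square root, so the clean bound is with exponent $\nu$, and the main thing to watch is keeping this convention consistent throughout rather than any analytic obstacle.
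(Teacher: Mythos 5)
Your proof is correct and follows essentially the same route as the paper's: reduce to $M\,K_{n}^{\ast}(\mid t-x\mid^{\nu};x)$ via positivity and the Lipschitz condition, then apply H\"older's inequality with conjugate exponents $\tfrac{2}{\nu}$ and $\tfrac{2}{2-\nu}$ together with $K_{n}^{\ast}(1;x)=1$ to obtain the bound $M\bigl(K_{n}^{\ast}((t-x)^{2};x)\bigr)^{\frac{\nu}{2}}$ (the paper carries out the same H\"older step at the level of the weighted sum and integral rather than abstractly for the positive linear functional, but the content is identical). You are also right to flag the exponent: since $\delta_{n,x}^{\ast}$ in \eqref{pop} is already the square root of the second central moment, the argument yields $M(\delta_{n,x}^{\ast})^{\nu}$, and the $(\delta_{n,x}^{\ast})^{\nu/2}$ in the theorem statement is a notational slip present in the paper itself, whose own proof likewise terminates at $M\bigl(K_{n}^{\ast}((t-x)^{2};x)\bigr)^{\frac{\nu}{2}}$.
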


\begin{proof}
We prove it by using \eqref{snn1} and H\"{o}lder inequality.
\begin{eqnarray*}
\mid K_{n}^{\ast }(f;x)-f(x)\mid &\leq &\mid K_{n}^{\ast
}(f(t)-f(x);x)\mid \\
&\leq &K_{n}^{\ast }\left( \mid f(t)-f(x)\mid ;x\right) \\
&\leq &\mid MK_{n}^{\ast }\left( \mid t-x\mid ^{\nu };x\right) .
\end{eqnarray*}%
Therefore\newline

$\mid K_{n}^{\ast}(f;x)-f(x) \mid$
\begin{eqnarray*}
&\leq & M \frac{n}{e_{\mu}(n r_n(x))}\sum_{k=0}^{\infty }\frac{%
(n r_n(x))^{k}}{\gamma _{\mu}(k)}\int_{\frac{k+2\mu \theta _{k}}{%
n}}^{\frac{k+1+2\mu \theta _{k}}{n}}\mid t-x \mid^\nu \mathrm{d}t \\
& \leq & M \frac{n}{e_{\mu}(nr_n(x))}\sum_{k=0}^\infty \left(\frac{%
(n r_n(x))^{k}}{\gamma_{\mu}(k)}\right)^{\frac{2-\nu}{2}} \\
& \times & \left(\frac{(n r_n(x))^{k}}{\gamma_{\mu}(k)}\right)^{\frac{\nu}{2}%
} \int_{\frac{k+2\mu \theta_k}{n}}^{\frac{k+1+2\mu \theta_k}{%
n}}\mid t-x \mid^\nu \mathrm{d}t \\
& \leq & M \left(\frac{n}{e_{\mu}(n r_n(x))}\sum_{k=0}^{\infty }\frac{%
(n r_n(x))^{k}}{\gamma _{\mu}(k)}\int_{\frac{k+2\mu \theta _{k}}{%
n}}^{\frac{k+1+2\mu \theta _{k}}{n}} \mathrm{d}t\right)^{%
\frac{2-\nu}{2}} \\
& \times & \left(\frac{n}{e_{\mu}(n r_n(x))}\sum_{k=0}^{\infty }\frac{%
(n r_n(x))^{k}}{\gamma _{\mu}(k)}\int_{\frac{k+2\mu \theta _{k}}{%
n}}^{\frac{k+1+2\mu \theta _{k}}{n}}\mid t-x \mid^2
\mathrm{d}t \right)^{\frac{\nu}{2}} \\
& = & M \left(K_{n}^{\ast}(t-x)^2;x\right)^{\frac{\nu}{2}}.
\end{eqnarray*}
Which complete the proof.
\end{proof}

Let $C_{B}[0,\infty )$ denote the space of all bounded and continuous
functions on $\mathbb{R}^{+}=[0,\infty )$ and
\begin{equation}
C_{B}^{2}(\mathbb{R}^{+})=\{g\in C_{B}(\mathbb{R}^{+}):g^{\prime },g^{\prime
\prime }\in C_{B}(\mathbb{R}^{+})\},  \label{snt2}
\end{equation}%
with the norm
\begin{equation}
\parallel g\parallel _{C_{B}^{2}(\mathbb{R}^{+})}=\parallel g\parallel
_{C_{B}(\mathbb{R}^{+})}+\parallel g^{\prime }\parallel _{C_{B}(\mathbb{R}%
^{+})}+\parallel g^{\prime \prime }\parallel _{C_{B}(\mathbb{R}^{+})},
\label{snt1}
\end{equation}%
also
\begin{equation}
\parallel g\parallel _{C_{B}(\mathbb{R}^{+})}=\sup_{x\in \mathbb{R}^{+}}\mid
g(x)\mid .  \label{snt3}
\end{equation}

\begin{theorem}
\label{snsn2} Let $K_{n}^{\ast}(.~;~.)$ be the operator defined in \eqref{snssns1}%
. Then for any $g \in C_B^2(\mathbb{R}^+)$ we have
\begin{equation*}
\mid K_{n}^{\ast}(f;x)-f(x)\mid \leq \left\{ \left( \left(\frac{n}{n+\beta}-1\right)x+\frac{\alpha}{n+\beta}\right)
+\frac{\delta_{n,x}^{\ast}}{2}\right\} \parallel g\parallel_{C_B^2(\mathbb{R}^+)},
\end{equation*}
where $\delta_{n,x}^{\ast}$ is given in Theorem \ref{pop}.
\end{theorem}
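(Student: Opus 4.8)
The plan is to run the standard Taylor-expansion argument, exploiting that $K_n^{\ast}$ is linear, positive and reproduces constants. For $g\in C_B^2(\mathbb{R}^{+})$ and any $t\in[0,\infty)$ I would start from the first-order Taylor formula with integral remainder about the fixed point $x$,
\begin{equation*}
g(t)=g(x)+(t-x)g'(x)+\int_x^t (t-u)g''(u)\,\mathrm{d}u ,
\end{equation*}
and record the elementary remainder bound $\left|\int_x^t (t-u)g''(u)\,\mathrm{d}u\right|\leq \tfrac{1}{2}\|g''\|_{C_B(\mathbb{R}^{+})}(t-x)^2$, valid for $t$ on either side of $x$.

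First I would apply the operator $K_n^{\ast}$ in the variable $t$ to both sides. Since $K_n^{\ast}(1;x)=1$ by Lemma \ref{snlm1}, the constant term reproduces $g(x)$ exactly, so that
\begin{equation*}
K_n^{\ast}(g;x)-g(x)=g'(x)\,K_n^{\ast}(t-x;x)+K_n^{\ast}\!\left(\int_x^t (t-u)g''(u)\,\mathrm{d}u;\,x\right).
\end{equation*}
Taking absolute values and using the positivity of $K_n^{\ast}$ together with the remainder bound then gives the two-term estimate
\begin{equation*}
\left|K_n^{\ast}(g;x)-g(x)\right|\leq \|g'\|_{C_B(\mathbb{R}^{+})}\left|K_n^{\ast}(t-x;x)\right|+\tfrac{1}{2}\|g''\|_{C_B(\mathbb{R}^{+})}\,K_n^{\ast}\!\left((t-x)^2;x\right).
\end{equation*}

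Next I would substitute the two central-moment identities from Lemma \ref{mtn}: the first moment $K_n^{\ast}(t-x;x)=\left(\frac{n}{n+\beta}-1\right)x+\frac{\alpha}{n+\beta}$ of part \eqref{snlm22}, and the second moment $K_n^{\ast}((t-x)^2;x)$ of part \eqref{snlm23}, which is exactly the quantity appearing under the radical in \eqref{pop}, i.e. $\left(\delta_{n,x}^{\ast}\right)^2$. Finally, bounding both $\|g'\|_{C_B(\mathbb{R}^{+})}$ and $\|g''\|_{C_B(\mathbb{R}^{+})}$ by the full norm $\|g\|_{C_B^2(\mathbb{R}^{+})}$ via \eqref{snt1} and factoring it out of the bracket yields the claimed estimate.

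This argument is essentially routine once the moments of Lemma \ref{mtn} are in hand, so I do not expect a serious obstacle. The only points requiring a little care are bookkeeping: one must retain the absolute value on the first-moment term, since $\frac{n}{n+\beta}-1<0$ so $K_n^{\ast}(t-x;x)$ need not be nonnegative, and one must correctly identify the second-moment contribution $\tfrac{1}{2}K_n^{\ast}((t-x)^2;x)$ with the coefficient of $\|g\|_{C_B^2(\mathbb{R}^{+})}$ appearing in the statement through the definition of $\delta_{n,x}^{\ast}$ in Theorem \ref{pop}.
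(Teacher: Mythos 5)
Your proposal is correct and follows essentially the same route as the paper: a second-order Taylor expansion about $x$, application of the positive linear operator, substitution of the central moments from Lemma \ref{mtn}, and absorption of $\|g'\|$ and $\|g''\|$ into $\|g\|_{C_B^2(\mathbb{R}^+)}$ — with your integral-remainder form being in fact slightly cleaner than the paper's Lagrange form $g''(\psi)$, which cannot legitimately be pulled outside $K_n^{\ast}$ since $\psi$ depends on $t$. Note only that what both arguments actually produce as the second-order coefficient is $\tfrac{1}{2}K_n^{\ast}((t-x)^2;x)=\tfrac{1}{2}(\delta_{n,x}^{\ast})^2$, so the term $\tfrac{\delta_{n,x}^{\ast}}{2}$ printed in the theorem should read $\tfrac{(\delta_{n,x}^{\ast})^2}{2}$; your bookkeeping agrees with the paper's own proof rather than with the misprinted statement, and your remark about keeping the absolute value on the first moment (which equals $\tfrac{-\beta x+\alpha}{n+\beta}$ and can be negative) is a point the paper itself glosses over.
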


\begin{proof}
Let $g\in C_{B}^{2}(\mathbb{R}^{+})$, then by using the generalized mean
value theorem in the Taylor series expansion we have
\begin{equation*}
g(t)=g(x)+g^{\prime }(x)(t-x)+g^{\prime \prime }(\psi )\frac{(t-x)^{2}}{2}%
,~~~\psi \in (x,t).
\end{equation*}%
By applying linearity property on $K_{n}^{\ast },$ we have
\begin{equation*}
K_{n}^{\ast }(g,x)-g(x)=g^{\prime }(x)K_{n}^{\ast }\left( (t-x);x\right)
+\frac{g^{\prime \prime }(\psi )}{2}K_{n}^{\ast }\left( (t-x)^{2};x\right),
\end{equation*}%
which imply that\newline
$\mid K_{n}^{\ast }(g;x)-g(x)\mid \leq \left( \left(\frac{n}{n+\beta}-1\right)x+\frac{\alpha}{n+\beta}\right) \parallel g^{\prime }\parallel _{C_{B}(\mathbb{R}^{+})}$\newline

$+\left\{ \frac{1}{(n+\beta)^2}\left(\beta^2x^2+\left(n-2 \alpha \beta+2n\mu\frac{e_\mu(-nr_n(x))}{e_\mu(nr_n(x))}\right)x+\alpha^2-\left(\frac{5}{12}+\mu\frac{e_\mu(-nr_n(x))}{e_\mu(nr_n(x))}\right)\right)\right\} \frac{\parallel g^{\prime \prime
}\parallel _{C_{B}(\mathbb{R}^{+})}}{2}$\newline.

From \eqref{snt1} we have ~~~$\parallel g^{\prime }\parallel
_{C_{B}[0,\infty )}\leq \parallel g\parallel _{C_{B}^{2}[0,\infty )}$.%
\newline
$\mid K_{n}^{\ast }(g;x)-g(x)\mid \leq \left( \left(\frac{n}{n+\beta}-1\right)x+\frac{\alpha}{n+\beta}\right)\parallel g\parallel _{C_{B}^{2}(\mathbb{R}^{+})}$\newline

$+\left\{ \frac{1}{(n+\beta)^2}\left(\beta^2x^2+\left(n-2 \alpha \beta+2n\mu\frac{e_\mu(-nr_n(x))}{e_\mu(nr_n(x))}\right)x+\alpha^2-\left(\frac{5}{12}+\mu\frac{e_\mu(-nr_n(x))}{e_\mu(nr_n(x))}\right)\right)\right\} \frac{\parallel g\parallel
_{C_{B}^{2}(\mathbb{R}^{+})}}{2}$.\newline
This completes the proof from \ref{snlm23} of Lemma \ref{snlm2}.
\end{proof}

The Peetre's $K$-functional is defined by
\begin{equation}
K_{2}(f,\delta )=\inf_{C_{B}^{2}(\mathbb{R}^{+})}\left\{ \left( \parallel
f-g\parallel _{C_{B}(\mathbb{R}^{+})}+\delta \parallel g^{\prime \prime
}\parallel _{C_{B}^{2}(\mathbb{R}^{+})}\right) :g\in \mathcal{W}^{2}\right\}
,  \label{snzr1}
\end{equation}%
where
\begin{equation}
\mathcal{W}^{2}=\left\{ g\in C_{B}(\mathbb{R}^{+}):g^{\prime },g^{\prime
\prime }\in C_{B}(\mathbb{R}^{+})\right\} .  \label{snzr2}
\end{equation}%
There exits a positive constant $C>0$ such that $K_{2}(f,\delta )\leq
C\omega _{2}(f,\delta ^{\frac{1}{2}}),~~\delta >0$, where the second order
modulus of continuity is given by
\begin{equation}
\omega _{2}(f,\delta ^{\frac{1}{2}})=\sup_{0<h<\delta ^{\frac{1}{2}%
}}\sup_{x\in \mathbb{R}^{+}}\mid f(x+2h)-2f(x+h)+f(x)\mid .  \label{snzr3}
\end{equation}

\begin{theorem}
\label{snsn3} Let $K_{n}^{\ast }(.~;~.)$ be the operator defined in %
\eqref{snss1} and $C_{B}[0,\infty )$ be the space of all bounded and
continuous functions on $\mathbb{R}^{+}$. Then for $x\geq \frac{1}{2}$  and $%
f\in C_{B}(\mathbb{R}^{+})$ we have\newline
$\mid K_{n}^{\ast }(f;x)-f(x)\mid $\newline

$\leq 2M\left\{ \omega _{2}\left( f;\sqrt{\frac{\left(\frac{2n}{n+\beta}-2\right)x+\frac{2\alpha}{n+\beta}+
\delta_{n,x}^{\ast}}{4}}\right) +\min
\left( 1,\frac{\left(\frac{2n}{n+\beta}-2\right)x+\frac{2\alpha}{n+\beta}+
\delta_{n,x}^{\ast}}{4}\right) \parallel f\parallel _{C_{B}(\mathbb{R}%
^{+})}\right\} $,\newline
where $M$ is a positive constant, $\delta_{n,x}^{\ast}$ is given in Theorem \ref%
{pop} and $\omega _{2}(f;\delta )$ is the second order modulus of
continuity of the function $f$ defined in \eqref{snzr3}.
\end{theorem}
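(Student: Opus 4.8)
The plan is to run the standard Peetre $K$-functional argument, using Theorem \ref{snsn2} as the estimate on smooth functions. I would fix an arbitrary $g \in \mathcal{W}^2 = C_B^2(\mathbb{R}^+)$ and split
\begin{equation*}
|K_n^*(f;x) - f(x)| \leq |K_n^*(f-g;x)| + |K_n^*(g;x) - g(x)| + |g(x) - f(x)|.
\end{equation*}
Since $K_n^*$ is a positive linear operator with $K_n^*(1;x)=1$, positivity gives $|K_n^*(f-g;x)| \leq K_n^*(|f-g|;x) \leq \|f-g\|_{C_B(\mathbb{R}^+)}$, and the third term is trivially bounded by the same quantity. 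Hence the non-smooth part contributes $2\|f-g\|_{C_B(\mathbb{R}^+)}$.

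For the middle term I would invoke Theorem \ref{snsn2}, which yields
\begin{equation*}
|K_n^*(g;x) - g(x)| \leq \Lambda_{n,x}\,\|g\|_{C_B^2(\mathbb{R}^+)}, \qquad \Lambda_{n,x} := \left(\frac{n}{n+\beta}-1\right)x + \frac{\alpha}{n+\beta} + \frac{\delta_{n,x}^{*}}{2}.
\end{equation*}
Combining the two estimates gives
\begin{equation*}
|K_n^*(f;x) - f(x)| \leq 2\|f-g\|_{C_B(\mathbb{R}^+)} + \Lambda_{n,x}\,\|g\|_{C_B^2(\mathbb{R}^+)}.
\end{equation*}
Because $g \in \mathcal{W}^2$ is arbitrary, taking the infimum over all such $g$ and comparing with the definition \eqref{snzr1} of the $K$-functional produces
\begin{equation*}
|K_n^*(f;x) - f(x)| \leq 2\,K_2\!\left(f, \frac{\Lambda_{n,x}}{2}\right).
\end{equation*}

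The final step is to pass from the $K$-functional to the second-order modulus of continuity. Here I would use the refined estimate $K_2(f,\delta) \leq M\{\omega_2(f,\sqrt{\delta}) + \min(1,\delta)\|f\|_{C_B(\mathbb{R}^+)}\}$, a sharpened form (with universal constant $M>0$) of the bound $K_2(f,\delta) \leq C\,\omega_2(f,\delta^{1/2})$ recorded after \eqref{snzr2}. Applying it with $\delta = \Lambda_{n,x}/2$ and checking the algebraic identity
\begin{equation*}
\frac{\Lambda_{n,x}}{2} = \frac{\left(\frac{2n}{n+\beta}-2\right)x + \frac{2\alpha}{n+\beta} + \delta_{n,x}^{*}}{4}
\end{equation*}
makes the square-root argument of $\omega_2$ and the argument of the $\min$-term coincide, giving exactly the claimed inequality. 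There is no genuine obstacle in this proof: positivity together with $K_n^*(1;x)=1$ disposes of the non-smooth contribution, while Theorem \ref{snsn2} supplies the smooth one. The only points deserving care are confirming the displayed identity so that both occurrences of the argument match the stated form, and emphasizing that it is the $\min(1,\delta)$ refinement of the $K_2$--$\omega_2$ comparison — not the plain inequality — that is responsible for the second summand containing $\|f\|_{C_B(\mathbb{R}^+)}$.
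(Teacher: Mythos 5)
Your proposal is correct and follows essentially the same route as the paper's own proof: the three-term splitting, the bound $2\|f-g\|_{C_B(\mathbb{R}^+)}$ on the non-smooth part via positivity and $K_n^{\ast}(1;x)=1$, the application of Theorem \ref{snsn2} to the smooth part, the infimum over $g\in\mathcal{W}^2$ yielding $2K_2\left(f;\tfrac{\Lambda_{n,x}}{2}\right)$, and the refined $K_2$--$\omega_2$ comparison with the $\min(1,\delta)$ term. The algebraic identity you check for the argument $\tfrac{\Lambda_{n,x}}{2}$ matches the displayed quantity in the statement, so nothing is missing.
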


\begin{proof}
We prove this by using the Theorem \eqref{snsn2}
\begin{eqnarray*}
\mid K_{n}^{\ast}(f;x)-f(x)\mid &\leq & \mid K_{n}^{\ast}(f-g;x)\mid+\mid
K_{n}^{\ast}(g;x)-g(x)\mid+\mid f(x)-g(x)\mid \\
&\leq & 2 \parallel f-g \parallel_{C_B(\mathbb{R}^+)}+ \frac{\delta_{n,x}^{\ast}}{2}%
\parallel g \parallel_{C_B^2(\mathbb{R}^+)} \\
&+ & \left( \left(\frac{n}{n+\beta}-1\right)x+\frac{\alpha}{n+\beta}\right)%
\parallel g \parallel_{C_B(\mathbb{R}^+)}
\end{eqnarray*}
From \eqref{snt1} clearly we have ~~~$\parallel g
\parallel_{C_B[0,\infty)}\leq \parallel g \parallel_{C_B^2[0,\infty)}$.%
\newline
Therefore,
\begin{equation*}
\mid K_{n}^*(f;x)-f(x)\mid \leq 2 \left(\parallel f-g \parallel_{C_B(%
\mathbb{R}^+)}+\frac{\left(\frac{2n}{n+\beta}-2\right)x+\frac{2\alpha}{n+\beta}+
\delta_{n,x}^{\ast}}{4}\parallel g \parallel_{C_B^2(\mathbb{R}^+)}\right),
\end{equation*}
where $\delta_{n,x}^{\ast}$ is given in Theorem \ref{pop}.\newline

By taking infimum over all $g\in C_{B}^{2}(\mathbb{R}^{+})$ and by using %
\eqref{snzr1}, we get
\begin{equation*}
\mid K_{n}^{\ast }(f;x)-f(x)\mid \leq 2K_{2}\left( f;\frac{\left(\frac{2n}{n+\beta}-2\right)x+\frac{2\alpha}{n+\beta}+
\delta_{n,x}^{\ast}}{4}.%
\right)
\end{equation*}%
Now for an absolute constant $D>0$ in \cite{scc1} we use the relation
\begin{equation*}
K_{2}(f;\delta )\leq D\{\omega _{2}(f;\sqrt{\delta })+\min (1,\delta
)\parallel f\parallel \}.
\end{equation*}%
This complete the proof.
\end{proof}

\section{Concluding remarks and observations}
Motivated essentially by the recent investigation of dunkl type generalization of Stancu operators by G\"{u}rhan I\c{c}\"{o}z \cite{ckz}
and modified Sz\'{a}sz-Mirakjan-Kantrovich operators by Oktay Duman \cite{duman}, we have proved several approximation results.  The operators defined in \eqref{snssns1}, modifying the recent investigation of G\"{u}rhan I\c{c}\"{o}z \cite{ckz} and also if we take
$\alpha=\beta=0$ in operators defined in \eqref{snssns1}, then it reduces to the operators \eqref{snss1}, which becomes a dunkl generalization of the paper investigated by Oktay Duman \cite{duman}. We have successfully extended these results and modifying the results of papers (\cite{ckz, duman}).
Several other related results have also been considered.\newline

\textbf{Acknowledgement.} Second author (MN) acknowledges the financial sup-
port of University Grants Commission (Govt. of Ind.) for awarding BSR (Basic
Scientific Research) Fellowship.

\end{document}